
\documentclass[leqno,12pt]{amsart}
\usepackage{a4,latexsym,amssymb,amsfonts}

\usepackage{amsmath,amscd} 

\usepackage{enumerate}
\usepackage{amsthm}

\numberwithin{equation}{section}

\addtolength{\headheight}{3.2pt}    
\textwidth=16cm 
\textheight=22cm
\oddsidemargin=-0.25cm
\evensidemargin=-0.25cm

\newtheorem{theorem}{Theorem}[section]
\newtheorem{lemma}[theorem]{Lemma}
\newtheorem{proposition}[theorem]{Proposition}
\newtheorem{corollary}[theorem]{Corollary}

\theoremstyle{definition}

\newcommand\somma{(\ell+\ell')}
\newcommand\ql{{q_{\ell,\ell'}}}
\newcommand\Ql{{Q_{\ell,\ell'}}}
\newcommand\dll{d_{\ell,\ell'}}
\newcommand\pll{\pi_{\ell,\ell'}} 
\newcommand\rnuno{R_{1}^{\nu}}
\newcommand\rndue{R_{2}^{\nu}}
\newcommand\hll{\cH^{\ell,\ell'}}

\newcommand\hlo{\cH^{\ell,0}}
\newcommand\zll{Z_{\ell,\ell'}}

\newcommand\RR{{\mathbb{R}}}
\newcommand\CC{{\mathbb{C}}}

\newcommand\ZZ{{\mathbb{Z}}}

\newcommand\cL{{\mathcal{L}}}
\newcommand\cH{{\mathcal{H}}}

\newcommand\cE{{\mathcal{E}}}

\newcommand\cC{{\mathcal{C}}}
\newcommand\cR{{\mathcal{R}}}

\newcommand\unme{\frac{1}{2}}

\newcommand\unpi{\frac{1}{p}}

\newcommand\eps{{{\varepsilon}}}

\newcommand\sfera{{S}^{2n-1}}

\newcommand\fdrnu{{\varphi}^{\delta}_{R,\nu}}
\newcommand\sdr{{S}^{\delta}_{R}}

\newcommand\sdrnu{{S}^{\delta}_{R,\nu}}
\newcommand\sdrzero{{S}^{\delta}_{R,0}}
\newcommand\nucleosdrnu{{s}^{\delta}_{R,\nu}}

\newcommand\autovKohn{{\lambda}_{\ell,\ell'}}

\newcommand\resto{\cE_R^\delta}
\newcommand\normaduesfera{L^2{(S^{2n-1})}}

\newcommand\hnur{{h}_{r,\nu}}
\newcommand\hhnur{{\hat{h}}_{r,\nu}}

\newcommand\radcL{\sqrt{\cL}}
\newcommand\radR{\sqrt{R}}

\newcommand\radpi{\sqrt{2\pi}}

\def\d{d}
\def\B{B}


\begin{document}

\title[Riesz means on complex spheres]{$L^p$-summability of Riesz
means for the  sublaplacian on complex
spheres}
\author{Valentina Casarino and Marco M. Peloso}
\address{Dipartimento di Matematica\\
Politecnico di Torino\\
Corso Duca degli Abruzzi 24\\10129 Torino}
\address{Dipartimento di Matematica\\
Universit\`a degli Studi di Milano\\
Via C. Saldini 50\\
20133 Milano}
\email{valentina.casarino@polito.it, marco.peloso@mat.unimi.it}
\thanks{}
\keywords{Riesz means, complex spheres, sublaplacian, heat kernel.}
\subjclass{42B15 (43A85, 32V20)}
\date{\today}

\maketitle

\begin{abstract}
In this paper we study the $L^p$-convergence of the
Riesz means $S^\delta_R(f)$ 
for the sublaplacian on the sphere $S^{2n-1}$ in the complex
$n$-dimensional space $\CC^n$.

We show that $S^\delta_R(f)$ converges to $f$ in $L^p(S^{2n-1})$ when
$\delta>\delta(p):=(2n-1)|\frac12-\frac1p|$.  The index
$\delta(p)$ 
improves the one
found by Alexopoulos and Lohou\'e \cite{AlexLo},
$2n|\frac12-\frac1p|$, and it 
coincides with the one found by Mauceri
\cite{Mauceri}
and, with different methods, by M\"uller \cite{Mueller} in the case of
sublaplacian on the Heisenberg group. \bigskip
\end{abstract}

\section{Introduction and statement of the main result}\medskip

In this paper we study the convergence in the $L^p$-norm
of the Riesz
 means associated with the sublaplacian on complex
spheres.  

Let $S^{2n-1}$  be the unit sphere in $\CC^n$ and let 
$\cL$ be the differential operator on $S^{2n-1}$ 
defined by
\begin{equation}\label{sublap}
\cL:= -\sum_{j<k}M_{jk}\overline{M}_{jk} + \overline{M}_{jk}M_{jk}\, ,
\end{equation}
where
$M_{jk}:=\overline{z}_j
\partial_{z_k}-\overline{z}_k \partial_{z_j}$.
The operator 
$\cL$ is called
the sublaplacian on $S^{2n-1}$.

The starting point in our approach
is the orthogonal  decomposition
of the space of square-integrable functions on the unit sphere
$S^{2n-1}$  in $\CC^n$
as
\begin{equation}\label{decomposition}
L^2 \left( S^{2n-1}\right)= 
\bigoplus_{\ell,\ell'=0}^{+\infty}  \hll\,,
\end{equation}
where
$\hll$ is the space of
 complex spherical harmonics of bidegree
$(\ell,\ell')$, $\ell,\ell'\ge 0$
[ViK, Ch.11].
Each subspace $\hll$ turns out to be  an eigenspace
for $\cL$ with  eigenvalue
$\lambda_{\ell,\ell'}= 2\ell\ell'+(n-1)\somma$ \cite{Geller}.

We denote by $\pll$ the
spectral projection from
$L^{2}(S^{2n-1})$
onto $\hll$.
Then, a function
$f\in L^2 (\sfera)$ may be written as
\begin{equation}\label{decompositionf}
f=\sum_{\ell,\ell'=0}^{+\infty}\pll f\,,
\end{equation}
where the partial sums of the series converge in the $L^2$-topology.
However, if $f\in L^p (\sfera)$, with $p\neq 2$,
then the partial sums 
in general fail to converge to $f$  in the
$L^p$-topology, 
unless  suitable smoothing and convergence factors are inserted.
A classical summability method  is provided by the
Riesz means 
(see \cite{Riesz,Bochner}, 
and also
\cite{DavisChang,Stein}).
 
The Riesz means of index
$\delta\ge 0$ associated with $\cL$ of a function
$f\in \cC^\infty(\sfera)$,
are defined as
\begin{equation}\label{medieBochner1}
S^{\delta}_{R} f
:= \sum_{\ell,\ell'=0}^{+\infty}
\biggl( 1-\frac{\lambda_{\ell,\ell'}}{R}\biggr)^{\delta}_{+} \pll f
\, .
\end{equation}

The main question is to establish the critical exponent
for the convergence of
$S^{\delta}_{R} f$ to $f$ in the $L^p$-norm, 
as $R\to \infty$.
Obviously,
$S^{\delta}_{R}
f$ converges to $f$ in the $L^2$-norm for all $\delta\ge 0$,
while for $1\le p<2$ we can expect convergence in the
$L^p$-norm only for $\delta$ positive
and sufficiently large.\medskip

There exists a vast literature concerning the convergence of
Riesz means in various settings, and related multiplier
theorems. 
For sake of brevity, 
here we only mention the results 
that are most significant for this work.

In the present context, previous results are due
to Alexopoulos and Lohou\'e \cite{AlexLo}.
They observed that,
by adapting their techniques
for  the study of Riesz means associated to
left-invariant sublaplacians on connected Lie groups of polynomial growth,
it is possible to prove that 
the Riesz means of index $\delta$
associated to the sublaplacian $\cL$ on the
sphere $\sfera$
converge in the $L^p$-norm provided
$\delta> 2n\big| \unpi-\unme\big|$.

In the framework of the Heisenberg group, relevant results are due to
Mauceri
and M\"uller, \cite{Mauceri} and \cite{Mueller}.  In these two
independent papers they studied the question of the $L^p$ convergence
of the Riesz means for the sublaplacian $\cL_H$ on the
Heisenberg group $H_{n-1}$, which is biholomorphic equivalent to the
sphere $S^{2n-1}$ taken away the north pole, via the Cayley
transform.  The operator $\cL_H$ is self-adjoint and admits the
spectral decomposition 
$$
\cL_H = \int_0^{+\infty} \lambda\, dE(\lambda)\, .
$$ 
The Riesz means of index $\alpha$ for a Schwartz function $f$ on
$H_{n-1}$, for $R>0$, are defined 
by 
\begin{equation}\label{heisenberg}
\tilde S_R^\alpha (f) := \int_0^{+\infty}
\biggl(1-\frac{\lambda}{R}\biggr)_+^\alpha
\, dE(\lambda) (f)\, .
\end{equation}

Mauceri and M\"uller, independently and with different methods,
proved that, when $f\in L^p(H_{n-1})$, $\tilde S_R^\alpha (f)$ converges to
$f$ in the 
$L^p$-norm provided
$\alpha> (2n-1)\big| \unpi-\unme\big|$.
We mention in passing that, in view of the restriction theorem proved
in \cite{Mueller-annals}, M\"uller studied the convergence of the
Riesz means in the mixed normed spaces $L^p(\CC^n,L^r(\RR))$.
\medskip

We shall prove the following result.
\begin{theorem}\label{teoremaBochner}
Let $n\ge2$, $\sdr f$ be defined in \eqref{medieBochner1}, $1\le
p<\infty$,
and 
$$
\delta (p):= (2n-1)\bigg| \unpi-\unme\bigg|\, .
$$
Then, 
if $\delta>\delta(p)$,
when $f\in L^p (\sfera)$ we have
$$
\sdr f\to f\text{ in\ } L^p (\sfera)\, .
$$
\end{theorem}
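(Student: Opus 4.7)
The natural plan is to realise $\sdr$ as a convolution operator on $\sfera$ and to deduce the convergence from uniform-in-$R$ operator bounds. Since $\cL$ commutes with the natural $U(n)$-action on the sphere, every $\pll$ is convolution with a zonal function, and consequently $\sdr f=f\ast k_R^{\delta}$ with
$$
k_R^{\delta}(\zeta,\omega)=\sum_{\ell,\ell'=0}^{+\infty}\Big(1-\frac{\autovKohn}{R}\Big)_{+}^{\delta}\dll\,\Fikl(\langle\zeta,\omega\rangle),
$$
where $\dll=\dim\hll$ and $\Fikl$ is the zonal (disk/Jacobi) polynomial associated with $\hll$. By a standard density argument, the theorem reduces to the statement that $\|\sdr\|_{L^p\to L^p}$ is bounded independently of $R$.

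The second step is Stein's analytic interpolation. Consider the analytic family $\sdrnu$, $\nu\in\CC$. For $\operatorname{Re}\nu>-\unme$ one has a trivial bound on $L^2(\sfera)$. If one can prove a uniform $L^1\to L^1$ bound on the critical line $\operatorname{Re}\nu=(2n-1)/2+\eps$ with at most polynomial growth in $|\operatorname{Im}\nu|$, complex interpolation will yield the sharp $L^p$-bound for every $\delta>\delta(p)$. The $L^1\to L^1$ bound in turn follows from a uniform $L^1$-estimate on the zonal kernel $k_R^{\nu}$; to obtain such an estimate one decomposes the sphere into shells indexed by the sub-Riemannian distance from the pole, and decomposes $k_R^{\nu}$ into frequency-localised pieces through the heat-kernel subordination formula. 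On each piece one invokes either pointwise heat-kernel bounds or the asymptotic behaviour of $\Fikl$, making essential use of the explicit expressions for $\autovKohn=2\ell\ell'+(n-1)\somma$ and for $\dll$.

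The main difficulty is sharpening the exponent from the Alexopoulos--Lohou\'e value $2n|\unpi-\unme|$ down to $(2n-1)|\unpi-\unme|$. The former is the homogeneous dimension dictated by crude heat-kernel bounds, while the latter is the topological dimension, and it is precisely the exponent found by Mauceri and M\"uller on $H_{n-1}$. To recover it on the sphere one cannot collapse the double index $(\ell,\ell')$ into a single parameter; instead one must exploit the two-parameter spectral structure together with the fact that the principal eigendirection of $\cL$ is transverse to the contact distribution. Concretely, one performs a stationary-phase analysis of $\Fikl$ in both parameters $\ell$ and $\ell'$, the gain of one unit in the exponent coming from an anisotropic change of variables that sees the Reeb direction with weight two and the horizontal directions with weight one. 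Managing uniform control of this anisotropic two-parameter Jacobi expansion, together with the polynomial dependence on $|\operatorname{Im}\nu|$ required by the interpolation, is the central technical obstacle.
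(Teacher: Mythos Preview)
Your outline is a plausible programme but not a proof: the decisive step---the uniform $L^1$ bound for the kernel $k_R^{\nu}$ on the line $\operatorname{Re}\nu=(2n-1)/2+\eps$---is asserted rather than established. You say one ``performs a stationary-phase analysis of $\Fikl$ in both parameters $\ell$ and $\ell'$'' and that ``the gain of one unit in the exponent comes from an anisotropic change of variables,'' but no such analysis is actually carried out, and this is precisely where the difficulty lies. The zonal kernel is a double sum of Jacobi polynomials $P_{q_{\ell,\ell'}}^{(n-2,|\ell-\ell'|)}$ weighted by the complex phase $e^{i(\ell'-\ell)\varphi}$; the authors themselves remark in their final section that this direct Ces\`aro-type approach is ``much more involved than in the real case'' and leave it open. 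So the gap in your proposal is exactly the part that would distinguish $2n-1$ from $2n$.

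The paper proceeds along a genuinely different route and avoids any direct kernel asymptotics or analytic interpolation. It fixes $p$, decomposes the multiplier $(1-t/R)_+^{\delta}$ dyadically into pieces $\fdrnu$ supported where $1-t/R\sim 2^{-\nu}$, and bounds each $\sdrnu$ separately. The key quantitative input is a restriction-type $(L^p,L^2)$ estimate for the projector onto the band $\autovKohn\in\bigl(R(1-2^{1-\nu}),R(1-2^{-1-\nu})\bigr)$; this is obtained from the two-parameter bounds $\|\pll\|_{L^p\to L^2}\le C(2q_{\ell,\ell'}+n-1)^{\beta(p)}(1+Q_{\ell,\ell'})^{(n-1)(1/p-1/2)}$ of Casarino together with a lattice-point count over a hyperbolic annulus, and it is here---not in any stationary-phase computation---that the exponent $2n-1$ emerges. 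That bound plus H\"older controls $\sdrnu f$ on a Kor\'anyi ball of radius $2^{\nu}/\sqrt R$. The complementary region is handled by writing $\sdrnu=\hnur(\sqrt{\cL})e^{-\cL/r^2}$, expressing $\hnur(\sqrt{\cL})$ through the cosine propagator, and using finite propagation speed together with Gaussian heat-kernel bounds. The heat-kernel machinery you invoke thus appears only in this off-diagonal piece, not as the source of the sharp index.

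In short: your interpolation-plus-kernel route is conceivable in principle (it is the strategy Mauceri used on $H_{n-1}$), but on the sphere the two-parameter Jacobi sum makes the required pointwise kernel estimate genuinely hard, and you have not supplied it. The paper sidesteps this by importing sharp spectral-projection bounds that already encode the anisotropic structure you are trying to extract by hand.
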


Hence, we improve the results
in \cite{AlexLo}, by showing that the critical index
for the
$L^p$-convergence of the Riesz means for $\cL$
is not larger than 
$(2n-1)\big| \unpi-\unme\big|$.  

It would be of great interest 
 to establish whether the condition 
$\delta>(2n-1)\big| \unpi-\unme\big|$ is also necessary.
As it is well known, the corresponding result for the Laplacian in
$\RR^n$, that is the classical Bochner-Riesz summability, is an
outstanding problem, 
when $n\ge3$.  More precisely, let $\cR^\delta$ denote the
multiplier operator on $L^p(\RR^n)$, $1\le p\le\infty$,  given by
$$
\cR^\delta f(x):=
\int_{|\xi|\le1} (1-|\xi|^2)_+^\delta \hat f(\xi)
e^{2\pi ix\cdot\xi}\, d\xi \, .
$$
Then $\cR^\delta f$ converges to $f$ in $L^p(\RR^n)$  when
$\delta>\max( n\big|\frac1p-\frac12\big|-\frac12,0\bigr)$ and
$\big|
\unpi-\unme\big| \ge\frac{1}{n+1}$.  
The former condition is
also necessary, while it is not known whether
the latter condition could be omitted.  This question is settled
when $n=2$ or when  $\cR^\delta$ is restricted  to some particular
subspace of $\RR^n$,
see \cite{Stein}
Ch. IX, or \cite{DavisChang},
e.g..\medskip

In the context of a compact Riemannian manifold of dimension $n$, 
Sogge proved sharp estimates for the
Riesz means associated to a self-adjoint elliptic differential
operator \cite{Sogge87,Soggelibro}.
More precisely, Sogge proved that  the Riesz means of index $\delta$
for an arbitrary elliptic operator of order $m$
converge in $L^1$
provided $\delta>(n-1)/2$,
and that, when the order $m$ is $2$, 
they converge in $L^p$ provided
$\delta>\max( n\big|\frac1p-\frac12\big|-\frac12,0\bigr)$
and 
$\big| \unpi-\unme\big| \ge\frac{1}{n+1}$.  Also in this case it is known
that the former condition is necessary, cfr. 
\cite{Sogge87}.

More recently, Sikora and Tao \cite{TaoSikora}
studied the $L^p$
convergence of the Bochner-Riesz means
associated to the Laplace-Beltrami operator
in two particular situations:
on the complex sphere
$\sfera$ when restricted to 
the subspace $H^p (\sfera)\subset L^p (\sfera)$
of analytic functions on $\sfera$, and on $\RR^n
=\RR^{n-1}\times\RR$, when restricted to the
space of 
cylindrically symmetric functions on $\RR^{n-1}\times\RR$.  
In both cases they obtained sharp results and were able to remove the
condition
$\big| \unpi-\unme\big| \ge\frac{1}{n+1}$
from the assumptions for the $L^p$ 
convergence of the Bochner-Riesz means.\medskip

Returning to the case of $S^{2n-1}$, it is worth noticing that
$2n-1$ represents the topological  dimension
of the CR sphere, while $2n$ is the so-called {\em homogeneous} dimension.
Thus,  our result may be related to
the spectral multiplier theorems
proved by Hebisch
\cite{Hebisch} and 
M\"uller and Stein
\cite{MuellerStein},
where the critical index is determined by the Euclidean dimension
of the underlying space; see \cite{CowlingSikora}
for an analogous result on $SU(2)$
and for a detailed discussion of the subject.
Moreover, recently  Cowling, Klima and Sikora 
proved a H\"ormander-type spectral multiplier theorem
for the Kohn-Laplacian on spaces of $(p,q)$-forms on
$\sfera$ with critical index 
equal to $(2n-1)/2$ \cite{Cowling}.
On the other hand, the index found in \cite{AlexLo} is determined
by the {\em homogeneous} dimension of $\sfera$.

We point out  that 
our Theorem \ref{teoremaBochner}
implies the cited result by Mauceri and
M\"uller (in the non-mixed norm case $r=p$)
for the Riesz means associated to the sublaplacian on the
Heisenberg group. Indeed, 
this fact 
is a consequence of certain contraction
results \cite{Ricci,RicciRubin,DooleyGupta1,DooleyGupta2}, 
as we shall see in Section \ref{final}.\medskip

The proof of the $L^p$-convergence
of  Riesz means follows the following pattern.
First of all, we break the operator
$S^{\delta}_{R}$ into a  sum of certain pieces $\sdrnu$.
Then we  estimate the $(L^p,L^p)$-norm of each  piece
$\sdrnu$ first on a certain Koranyi ball on $\sfera$.
As Sogge's bounds in the Riemannian context,
our estimate relies on a restriction-type lemma,
Lemma \ref{sommeiperboli}.  This lemma in turns follows from sharp
two-parameter estimates for the spectral projections on
the CR sphere recently proved by the first author \cite{Casarino2}.
Secondly, we estimate the
$(L^p,L^p)$-norm of
$\sdrnu$ outside the Koranyi ball.
Here our proof is based on some techniques and ideas,
introduced  by M. Taylor  \cite{Tay2}
and developed by Alexopoulos and
Lohou\'e \cite{AlexLo}. 
In particular, a basic r\^ole in this estimate
is played by the heat kernel associated with the sublaplacian on
$\sfera$.\medskip

Finally,  in this paper we focus the attention on
Riesz means associated to the sublaplacian on $\sfera$.
It would be interesting to treat the case of Riesz means associated to the
Kohn-Laplacian for $(p,q)$-forms as well.  We refer the reader to 
\cite{Geller} for the precise definitions.
However, the analysis of the Kohn-Laplacian, most recently  
developed  in \cite{RicciJeremie} and \cite{Cowling},
seems to be more involved, even in the case of functions;
see also the seminal paper \cite{FollandStein} for the case of the
Heisenberg group.
We hope to extend our methods to cover this more general situation in
the near future.

We shall use  the symbol $C$  to denote
constants which may vary from one formula to the next, and $[x]$ to
denote the greatest integer $\le x$.
\bigskip

\section{Preliminaries}\label{prelim}\medskip

For $n\ge 2$ let $\CC^n$ denote the n-dimensional complex space
endowed with the scalar product
$\langle z,w\rangle:=z_1 \bar w_1 +\cdots+z_n\bar w_n$,  $z,w\in\CC^n$, and
let $S^{2n-1}$ denote  the unit sphere in
$\CC^n$, that is 
$$ 
S^{2n-1} 
:=\bigl\{z=(z_1,\dots,z_n)\in\CC^n:\, \langle z,z\rangle=1 \bigr\}\,.
$$

We recall that, for $z,w\in\sfera$, the formula
\begin{equation}\label{distanza}
\d (z,w):= \left| 1-\langle z,w\rangle \right|^{\unme}\,,
\end{equation}
defines a metric on $S^{2n-1}$, \cite{Rudin}.
The set
 $$
\B (z,r):= \{w\in\sfera:\, \d (z,w)<r \}\,
$$
defines
 the corresponding ball  centered at $z\in\sfera$,
 with radius $r>0$.
It is well known  that there exist two constants $c_1,c_2>0$ such that 
$$
c_1 r^{2n} \le |\B(z,r)|:=\text{Vol}\, (\B(z,r))\le c_2
r^{2n}\,.
$$
Obviously,
$\B (z,r)=\sfera$
when $r>\sqrt{2}$.

Also the operator $\cL$ induces a distance $d_\cL$, called the
control, or Carnot-Caratheodory, distance.  It is well known that 
 $d_\cL$ is
equivalent to the distance $d$ defined in \eqref{distanza}, so we will
use the latter throughout the rest of the paper, see 
e.g. \cite{CD-al}.

\subsubsection*{Spherical harmonics and the spectral projections} 
Consider the space $L^2(S^{2n-1})$,
endowed with the inner product
$$
(f,g):= \int_{S^{2n-1}}
f(\xi) \overline{g(\xi)} \, d\sigma (\xi)\, ,
$$
where $d\sigma$ is the Lebesgue surface measure, which is invariant
under the action of the unitary group $U(n)$.

For $\ell,\ell'\ge 0$
we will denote by  $\hll$ the space of the restrictions to
$S^{2n-1}$  of harmonic polynomials
$p(z,\bar{z})$, homogeneous of degree $\ell$ in
$z$ and of  degree $\ell'$ in $\bar z$.

Then, the subspaces $\hll$ are finite dimensional $d_{\ell,\ell'}$,  $U(n)$-invariant,
mutually orthogonal and their sum is dense in $L^2(S^{2n-1})$, that is 
the decomposition \eqref{decomposition} holds.

A special r\^ole in $\hll$ is played  by the so-called 
{\em zonal}
functions.
Let $\big\{Y_k^{\ell,\ell'}\big\}$, $j=1,\dots,d_{\ell,\ell'}$,
be an orthonormal basis for $\hll$.  
For $(\xi,\eta) \in S^{2n-1}\times S^{2n-1}$ 
set
$$
\zll(\xi,\eta) :=\sum_{k=1}^{d_{\ell,\ell'}}
Y_k^{\ell,\ell'} (\xi)
\overline{Y_k^{\ell,\ell'}(\eta)}
\, .
$$
Then, for all $f\in\hll$ we have
$$
f(\xi)=\int_{S^{2n-1}} \zll(\xi,\eta)f(\eta)\, d\sigma(\eta)\, .
$$ 
Since $\hll$ is finite dimensional, the above pairing makes sense for
all $f\in L^2(\sfera)$.

If we denote by $\pll$ the orthogonal projector onto $\hll$,
for all $f\in L^{2}(S^{2n-1})$, then
\begin{equation}\label{proiettori}
 \pll f= \int_{S^{2n-1}} \zll(\cdot,\eta)f(\eta)\, d\sigma 
\,.
\end{equation}

For each fixed point $\eta\in S^{2n-1}$, the function
$f=\zll(\cdot,\eta)$ is in $\hll$ and it is constant on the orbits of the
stabilizer of $\eta$.

An explicit formula for the zonal function $\zll\in\hll$, for
$\ell\ge\ell'$, is given by
\begin{equation}\label{zonali}
\zll (\xi,\eta)
= \frac{1}{\omega_{2n-1}}
\frac{(\ell+n-2)!}{\ell'! (n-1)!}
    \langle\xi,\eta\rangle^{\ell-\ell'}
P_{\ell'}^{(n-2,\ell-\ell')}(2\langle\xi,\eta\rangle^2-1)\, ,
\end{equation}
where ${\omega_{2n-1}}$ denotes the surface area of ${S^{2n-1}}$,
and $P_{\ell'}^{(n-2,\ell-\ell')}$ is the Jacobi polynomial.
In particular, when $\ell'=0$, 
recall  that $\hlo$ consists of holomorphic polynomials, and
$\cH^{0,\ell}$ consists of polynomials whose complex conjugates are
holomorphic. Since $P_0^{(n-2,\ell-\ell')}\equiv1$,
in this case the zonal function is
$$
Z_{\ell,0} (\xi,\eta)=\frac{1}{\omega_{2n-1}}
\binom{\ell+n-1}{\ell} \langle\xi,\eta\rangle^\ell \, .
$$

For the case $\ell'\ge\ell$, recall that $\zll(\xi,\eta)
=\overline{Z_{\ell',\ell}(\eta,\xi)}$.

Recall that the sublaplacian $\cL$ is defined by \eqref{sublap}.
Then, for $f\in\hll$ we have that
$$
\cL f =
\bigl(2\ell\ell'+(n-1)(\ell+\ell')\bigr)f=:\lambda_{\ell,\ell'}f \, .
$$
Moreover, $\cL$ turns out to be
a self-adjoint positive definite, subelliptic operator.
For these and other properties of $\cL$ we refer the reader to
\cite{Geller}; see also \cite{RicciJeremie}.\medskip

\subsubsection*{Two parameter estimates for the spectral projections}
In our proof we shall use  a sharp two-parameter estimate  the 
spectral projections $\pll$
on complex spheres, recently obtained by the first
author.
\begin{theorem}\label{teoremariassuntivo}
{\rm (\cite{Casarino1,Casarino2})}
Let $n\ge 2$ and let $\ell,\ell'$ be non-negative integers.
Then, for $1\le p\le2$ we have
 \begin{equation}\label{stimaduep}
\|\pll\|_{(L^p,L^2)}\le C
\left( 2\ql+n-1\right)^{\beta(p)} %
\left(1+\Ql\right)^{(n-1) \left(\frac{1}{p}-\frac{1}{2}\right)}\, ,
\end{equation}
where 
$\ql:=\min(\ell,\ell')$, $\Ql:=\max(\ell,\ell')$, and
$$
\beta(p)= 
\begin{cases} (n-1)(\frac{1}{p}-\frac{1}{2}) -\unme \quad&\text{ if }\quad 
1\le p<2 \frac{2n-1}{2n+1}\cr
-\unme\bigl(\frac1p-\frac12\bigr)& \text{ if }\quad 
 2\frac{2n-1}{2n+1}
\le p\le 2.
\end{cases}
$$
\end{theorem}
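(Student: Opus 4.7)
The plan is to invoke the $TT^{*}$ principle: since $\pll$ is an orthogonal projection, one has
\[
\|\pll\|_{L^{p}\to L^{2}}^{2}=\|\pll\|_{L^{p}\to L^{p'}},
\]
so it suffices to bound the $L^{p}\to L^{p'}$ operator norm of the integral operator with kernel $\zll$ on $\sfera$. I would analyse this kernel via the explicit Jacobi formula \eqref{zonali}: writing $t=|\langle\xi,\eta\rangle|$ and $\phi=\arg\langle\xi,\eta\rangle$, the modulus $|\zll(\xi,\eta)|$ is a function of $t$ alone, while the phase $e^{i(\ell-\ell')\phi}$ coming from $\langle\xi,\eta\rangle^{\ell-\ell'}$ carries all the oscillation in the argument. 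The orbital decomposition coming from the $U(n-1)$-stabilizer of a base point reduces integrals over $\sfera$ to weighted integrals on the two-dimensional orbit space $\{(t,\phi)\}$, with volume factor comparable to $t(1-t^{2})^{n-2}\,dt\,d\phi$.

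Next I would derive sharp pointwise bounds on $\zll$ using Szeg\H{o}-type asymptotics for $P_{q}^{(n-2,Q-q)}$ in its three regimes (oscillatory bulk, Airy-type transition near $t=1$, exponentially small tail), where $q=\min(\ell,\ell')$ and $Q=\max(\ell,\ell')$. Combined with the trivial $L^{2}\to L^{2}$ bound (operator norm $1$) and the $L^{1}\to L^{\infty}$ bound $\|\zll\|_{L^{\infty}}\le\dll/\omega_{2n-1}$, these already produce endpoint estimates at $p=2$ and $p=1$: at $p=1$, for instance, $\dll\sim q^{n-2}Q^{n-1}$ matches the squared claimed bound $(2q+n-1)^{n-2}(1+Q)^{n-1}$ exactly, so the $L^{1}\to L^{\infty}$ step is sharp. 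The subtlety is that plain convex interpolation between $p=1$ and $p=2$ overshoots $\beta(p)$ in the interior, so a further intermediate endpoint is needed.

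I would establish that intermediate estimate at the Stein--Tomas type exponent $p_{0}=2(2n-1)/(2n+1)$, which corresponds to effective restriction dimension $2n-2$ reflecting the two-dimensional orbit structure of $\zll$ together with the remaining parameter directions in $\hll$. This is the main obstacle: it requires a uniform stationary-phase analysis of the oscillatory integral $\pll f(\xi)=\int_{\sfera}\zll(\xi,\eta)f(\eta)\,d\sigma(\eta)$, tracking the two parameters $q$ and $Q$ independently so that the difference $Q-q$ enters only through the Jacobi parameter (and through the oscillating factor $e^{i(\ell-\ell')\phi}$), while $q$ controls the Jacobi degree. Once the endpoint $p_{0}$ is in hand, complex interpolation with the trivial $L^{2}\to L^{2}$ bound yields the regime $p\in[p_{0},2]$ with $\beta(p)=-\frac{1}{2}\bigl(\frac{1}{p}-\frac{1}{2}\bigr)$, and interpolation with the $L^{1}\to L^{\infty}$ bound yields the regime $p\in[1,p_{0}]$ with $\beta(p)=(n-1)\bigl(\frac{1}{p}-\frac{1}{2}\bigr)-\frac{1}{2}$, completing the theorem.
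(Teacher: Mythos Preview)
The paper does not prove Theorem~\ref{teoremariassuntivo}: it is quoted from \cite{Casarino1,Casarino2} as an external input and then used as a black box in Lemma~\ref{sommeiperboli} and Proposition~\ref{lemmasdrnu}. There is therefore no ``paper's own proof'' to compare your attempt against.

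That said, your outline is broadly the right shape for how such estimates are obtained in the cited references: the $TT^{*}$ identity $\|\pll\|_{L^{p}\to L^{2}}^{2}=\|\pll\|_{L^{p}\to L^{p'}}$ reduces the problem to bounding the zonal kernel operator; the $L^{1}\to L^{\infty}$ endpoint follows from $\|\zll\|_{\infty}=\dll/\omega_{2n-1}$ with the correct asymptotics $\dll\asymp q^{n-2}Q^{n-1}$; and one then needs an intermediate Stein--Tomas type endpoint at $p_{0}=2(2n-1)/(2n+1)$, after which interpolation with $p=2$ and $p=1$ gives the two regimes for $\beta(p)$. What you have written, however, is a programme rather than a proof: the entire analytic content lives in the $p_{0}$ endpoint, which you flag as ``the main obstacle'' but do not carry out. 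The actual argument in \cite{Casarino2} uses a careful two-parameter analysis of the Jacobi asymptotics (tracking $q$ and $Q$ separately) together with an orthogonality argument to establish the critical restriction estimate, and this is where all the work lies. Your description of the intermediate step (``uniform stationary-phase analysis'', ``effective restriction dimension $2n-2$'') is suggestive but does not supply that argument.
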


We remark that, on $S^{2n-1}$, besides the sublaplacian $\cL$,
one may  also consider the Laplace-Beltrami operator
$-\Delta_{S^{2n-1}}$. 
It is possible to work out a joint spectral theory. In particular,
$\hll$ turns out to be a joint  eigenspace  with
eigenvalue $\mu_{\ell,\ell'}$ for $-\Delta_{S^{2n-1}}$, where
$\mu_{\ell,\ell'}:= \somma\left( \ell+\ell'+2n-2\right)$, \cite{Geller}.
Then $\ql$ and $\Ql$ are related to the
 eigenvalues $\lambda_{\ell,\ell'}$ and  $\mu_{\ell,\ell'}$, since
 they  grow, respectively, as 
$\lambda_{\ell,\ell'}/(\ell+\ell')$ and
${\mu_{\ell,\ell'}}^{\unme}$.  This was the spirit
of the results in \cite{Casarino1,Casarino2}.\medskip

\subsubsection*{The heat kernel associated with the sublaplacian}
The proof of Theorem \ref{teoremaBochner} relies on some good estimates
for the heat kernel of  $\cL$.  We shall denote by
$p_{t} (z,w)$ the {\it{heat kernel}} associated with $\cL$,
that is the Schwartz kernel
of the operator $e^{-t\cL}$, for $t>0$.

We shall use a  Gaussian upper estimate,
that is  an upper estimate for the heat kernel, where the space-time scaling
$\d_\cL (z,w)^2/t$ is of Gaussian type, proved in
the real subelliptic framework by
Varopoulos \cite{Var1,Var2}.
In particular he showed 
that there exist two constants $C_1,C_2>0$ such that
\begin{equation}\label{gaussian}
p_{t} (z,w) \le
\frac{C_2}{\big|B(z,\sqrt{t}) \big|}
\exp\{-C_1 \d (z,w)^2/t\} 
\end{equation}
for every $t>0$ and $z,w\in\sfera$.
(The distance appearing in this estimate is in fact the control
distance defined by $\cL$.  We have pointed out that this distance is
equivalent to the distance $\d$ defined in \eqref{distanza}.)
This estimate implies that 
\begin{equation}\label{normapt}
\|p_{t}(z,\cdot)\|_{L^2 (\sfera)}\le
C \big| B(z,{\sqrt{t}})\big|^{-\unme}\,.
\end{equation}

A central idea in our approach is due to M. Taylor \cite{Tay1,Tay2}.
If $m$ is a continuous  function
on $\RR_+$, we may define the operator
$$
m(\cL):= \sum_{\ell,\ell'=0}^{\infty}
m(\lambda_{\ell,\ell'}) \pll\, ,
$$
which is bounded on $L^2 (\sfera)$.
Assume now that $m\in\cC^{\infty}_0$ and $m(\cL)=f(\radcL)$,
with $f$ an even function. This implies
\begin{equation}\label{ideaTaylor}
f(\radcL)= 
\frac{1}{\sqrt{2\pi}}
\int_{-\infty}^{+\infty} \hat{f} (t) \,\cos t\sqrt{\cL}\, dt\, .
\end{equation}
To treat $f(\radcL)$ we use both the fact that $\cos t\radcL$ is bounded on
$L^2 (\sfera)$ and the
 {\it finite propagation speed}
property proved, for real subelliptic
operators, by Melrose \cite{Melrose}.

More precisely, if we denote  by  $G_{t}(z,w)$ the kernel
of the operator $\cos t\radcL$, which 
is a fundamental solution
for the wave equation, and therefore
\begin{equation}\label{propagazione}
\text{supp\,}
(G_t)\subseteq \bigl\{
(z,w)\in\sfera\times\sfera\,:\, \d (z,w)\le |t|
\bigr\}\,. \bigskip
\end{equation}

\section{Proof of Theorem \ref{teoremaBochner}} \medskip

By duality, and to avoid the trivial case $p=2$,
 we may restrict ourselves to the case $1\le p<2$, and we
shall do so in the remainder of the paper, unless explicitely stated.

In order to prove
the convergence of
$\sdr f$ to $f$ as $R\to\infty$
in $L^p$-norm 
it suffices to prove the $L^p$ uniform
boundedness of $\sdr$,  that is 
that, if $1\le p<2$ and $\delta>\delta(p)$, then
$$
\|\sdr f\|_p \le C\|f\|_p
$$
for some constant $C>0$ independent of $R>0$, for all $f\in \cC^\infty(S^{2n-1})$.

Let $\varphi$ be a $\cC_0^{\infty}(\RR)$, with support contained in
$(\unme,2)$
and such that
$\sum_{\nu=-\infty}^{\infty} \varphi (2^{\nu}t)=1$ for every $t>0$,
and set $\varphi_0(t) =1-\sum_{\nu=1}^{\infty} \varphi (2^{\nu}t)$.
Then, for $\nu=1,2,\dots$, and $0\le t\le R$ we set
\begin{equation}\label{fdrnu}
\fdrnu (t):= \biggl(1-\frac{t}{R}\biggr)^\delta_+
\varphi \bigl(
2^{\nu}(1-t/R)\bigr) \, .
\end{equation}
Clearly we have:
\begin{itemize}
\item[(i)]
$\text{supp\,} \fdrnu \subseteq
\bigl( R(1-2^{-\nu+1}),R(1-2^{-\nu-1}) \bigr)$;
\item[(ii)]
$\sup_{t\in\RR} |\fdrnu (t)|\le C\, 2^{-\nu\delta}$;
\item[(iii)] for every $N=1,2,\dots$, there exists $C_N>0$ such that
$$
\big| {\partial_t}^N 
\fdrnu (t)\big|\le C_N
\left( \frac{2^\nu}{R} \right)^N\, .
$$
\end{itemize}

For $\nu=1,2\dots$, we set
\begin{equation}\label{sdrnu}
\sdrnu f:= \sum_{\ell,\ell'=0}^{+\infty}
\fdrnu \left(\autovKohn\right)
\pll f\, .
\end{equation}
Then
\begin{align*}
\sdr f
& = \sum_{\ell,\ell'=0}^{+\infty}
\varphi_0 (1-\autovKohn/R) 
\biggl(1-\frac{\autovKohn}{R}\biggr)^\delta_+ \pll f
\\
&\qquad +
\sum_{\nu=1}^{[\log \radR]}
\sum_{\ell,\ell'=0}^{+\infty}
\fdrnu \left(\autovKohn\right)
\pll f
+ \sum_{\ell,\ell'=0}^{+\infty} \sum_{\nu=[\log \radR]}^{+\infty}
\fdrnu \left(\autovKohn\right) \pll f
\\ 
&=:
\sdrzero f + \sum_{\nu=1}^{[\log \radR]}
\sdrnu  f + \resto f \, .
\end{align*}

It is clear that the main term is the second one, that is 
$\sum_{\nu=1}^{[\log \radR]} \sdrnu f$.
Arguing as in \cite{Sogge87}, it suffices to prove that
there exist constants $C>0$ and $\eps>0$ such that
\begin{equation}\label{stima-da-provare}
\|\sdrnu f \|_{p}\le
C 2^{-\eps \nu}
\| f\|_{p}\,,\qquad\qquad\nu=1,2,\ldots,[\log \sqrt{R}]\, .
\end{equation}

The bound  for $\resto f$
is proved in Lemma 
\eqref{lemmaresto}, while
in order to dispose of the term $\sdrzero f$ we will prove the following.
\begin{proposition}\label{propoKara}
The operator
$\sdrzero$
is bounded on $L^p (\sfera)$, uniformly
in $R>0$, that is, there exists a costant $C>0$ such
that
\begin{equation}\label{stimasdrzero}
\|\sdrzero f\|_p \le C \|f\|_p
\end{equation}
for all $R>0$
and for every $p\in [1, \infty)$.
\end{proposition}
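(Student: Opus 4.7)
The plan is to recognise $\sdrzero$ as a spectral multiplier $m(\cL/R)$ associated with a \emph{fixed} smooth compactly supported function $m$, and then to deduce the required uniform $L^p$ bound from a pointwise estimate on the (smooth) kernel of $\sdrzero$ obtained by combining the Gaussian heat kernel bound \eqref{gaussian} with the finite propagation speed \eqref{propagazione}.

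First I would observe that $\varphi_0(t)=1-\sum_{\nu\ge 1}\varphi(2^\nu t)$ vanishes in a neighbourhood of $0^+$ (because $\sum_{\nu\ge 1}\varphi(2^\nu t)=1$ for all sufficiently small $t>0$), while $(1-s)_+^\delta$ vanishes for $s>1$. Consequently
\[
m(s):=\varphi_0(1-s)(1-s)_+^\delta
\]
is a $\cC^\infty$ function on $\RR$ whose restriction to $[0,+\infty)$ is supported in $[0,1-c]$ for some $c>0$ independent of $R$. Since the spectrum of $\cL$ lies in $[0,+\infty)$, $m$ may be modified on $(-\infty,0)$ so as to belong to $\cC_0^\infty(\RR)$, without altering $m(\cL/R)$. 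Setting $g(u):=m(u^2)$, the function $g$ is even, smooth and compactly supported, and
\[
\sdrzero=g(\radcL/\radR).
\]

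The heart of the argument is the pointwise estimate
\[
|K_R(z,w)|\le C_N\,\frac{1}{|\B(z,1/\radR)|}\bigl(1+\radR\,\d(z,w)\bigr)^{-N},\qquad N\in\NN,
\]
for the kernel $K_R$ of $\sdrzero$, with $C_N$ independent of $R>0$ and $z,w\in\sfera$. To establish it I would start from the Fourier representation \eqref{ideaTaylor}, which after a rescaling yields
\[
g(\radcL/\radR)=\frac{1}{\radpi}\int_\RR\hat g(s)\cos\!\Bigl(\frac{s}{\radR}\radcL\Bigr)\,ds.
\]
By \eqref{propagazione} the kernel of $\cos(s\radcL/\radR)$ vanishes outside $\{\d(z,w)\le|s|/\radR\}$, while the Gaussian bound \eqref{gaussian}, via the standard Davies--Gaffney off-diagonal $L^2$ mechanism, furnishes the quantitative control on annuli at scale $2^j/\radR$ which is needed to turn the integral representation into a pointwise estimate. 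Combined with the rapid decay of $\hat g$ and the usual dyadic splitting argument, familiar from the works of Hebisch and of Alexopoulos-Lohou\'e \cite{AlexLo}, this gives the announced decay of $K_R$.

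Granted this estimate, since $|\B(z,r)|\sim r^{2n}$ for $r\le1$, the change of variable $u=\radR\,\d(z,w)$ in polar-type coordinates centred at $z$ gives
\[
\int_\sfera |K_R(z,w)|\,d\sigma(w)\le C_N\int_0^{\sqrt{2R}}(1+u)^{-N}u^{2n-1}\,du\le C
\]
for any $N>2n$, uniformly in $z\in\sfera$ and $R>0$. Self-adjointness of $\sdrzero$ gives the symmetric bound in the first variable, and Schur's test then delivers \eqref{stimasdrzero} for every $p\in[1,\infty]$. The main obstacle is the pointwise kernel bound with the correct $\radR$-scaling; once that is in hand, the reduction to Schur's test is routine.
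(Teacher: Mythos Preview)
Your approach is correct and is precisely the ``argument of general nature'' that the paper explicitly mentions right after stating the proposition but elects not to carry out. The paper instead recycles the machinery built for the main term: it first proves the restriction-type bound $\bigl\|\sum_{\lambda_{\ell,\ell'}\in(0,R/2)}\pll\bigr\|_{(L^p,L^2)}\le C\,R^{n(\frac1p-\frac12)}$ (an easy variant of Lemma~\ref{sommeiperboli}, using Theorem~\ref{teoremariassuntivo}), combines it with H\"older on a Koranyi ball of radius $1/\sqrt R$ to handle the on-ball part, and then controls the kernel in $L^1$ outside that ball via the same heat-kernel/finite-propagation splitting used for $\sdrnu$, with $h_{r,0}$ in place of $h_{r,\nu}$. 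Your route bypasses the restriction input entirely by exploiting that the multiplier $m(s)=\varphi_0(1-s)(1-s)_+^\delta$ is a \emph{fixed} $\cC_0^\infty$ function independent of $R$; this is conceptually cleaner and works for any smooth compactly supported multiplier, at the cost of outsourcing the key kernel decay to the literature rather than deriving it in place. One minor remark: the pointwise bound you assert is a little stronger than what you actually use---an $L^1$ estimate on dyadic annuli (which is what the finite-propagation/heat-kernel argument most directly yields once one factors $g(u)=\tilde g(u)e^{-u^2}$ with $\tilde g\in\cC_0^\infty$, exactly as in the paper's treatment of $\nucleosdrnu$) already suffices for Schur's test, so you could weaken that claim without changing the conclusion.
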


This proposition could probably be proved by an argument of general
nature, 
since the operator is defined by a spectral multiplier that is $C^\infty$
with compact support, for which we would need estimates uniform in
$R$.  However, 
we prefer to prove it in a more direct  way, 
by adapting our techniques for the estimate of
the main term  $\sum_{\nu=1}^{[\log \radR]} \sdrnu f$.
Hence, we defer the proof
to the end of the section.\medskip

Our proof hinges on the following restriction-type lemma.
\begin{lemma}\label{sommeiperboli}
If $1\le p\le 2$, there exists a constant $C>0$
such that 
\begin{equation}\label{rieszthorin}
\Big\|\sum_{\autovKohn\in(R(1-2^{1-\nu}), R(1-2^{-1-\nu}))}
\pll \Big\|_{(L^p, L^2)}
\le C \Bigl(
R^n \max\bigl(2^{-\nu},1/\sqrt{R}\bigr) \Bigr)^{\unpi-\unme} 
\, .
\end{equation}
\end{lemma}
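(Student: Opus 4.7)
The plan is to derive the bound by interpolation between the endpoints $p = 1$ and $p = 2$ via Riesz--Thorin, where at each endpoint the operator norm can be computed explicitly.

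Set $T := \sum_{\autovKohn \in I}\pll$, with $I := \bigl(R(1 - 2^{1-\nu}),\, R(1 - 2^{-1-\nu})\bigr)$. Since $T$ is the orthogonal projection onto $\bigoplus_{\autovKohn \in I}\hll$, one has $\|T\|_{(L^2, L^2)} \le 1$. For the $L^1 \to L^2$ endpoint, the Schwartz kernel of $T$ is $K(\xi,\eta) = \sum_{\autovKohn \in I}\zll(\xi,\eta)$; orthogonality of the subspaces $\hll$ together with the standard identity $\zll(\eta,\eta) = \dll/\omega_{2n-1}$ (a consequence of unitary invariance) yields
\[
\|K(\cdot,\eta)\|_{L^2}^2
= \sum_{\autovKohn \in I}\|\zll(\cdot,\eta)\|_{L^2}^2
= \sum_{\autovKohn \in I}\zll(\eta,\eta)
= \frac{1}{\omega_{2n-1}}\sum_{\autovKohn \in I}\dll\, ,
\]
independently of $\eta$, so $\|T\|_{(L^1, L^2)}^2 = \omega_{2n-1}^{-1} \sum_{\autovKohn \in I}\dll$. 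Interpolation via Riesz--Thorin now gives
\[
\|T\|_{(L^p, L^2)}
\le C\Bigl(\sum_{\autovKohn \in I}\dll\Bigr)^{\unpi-\unme}\, , \qquad 1 \le p \le 2\, .
\]

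The problem therefore reduces to proving $\sum_{\autovKohn \in I}\dll \le CR^n \max(2^{-\nu}, 1/\radR)$. Using the closed-form expression for $\dll$ one has $\dll \lesssim (\Ql + \ql)(\Ql\ql + 1)^{n-2}$. Parametrizing the pairs $(\ell, \ell')$ by $q := \ql$ and $Q := \Ql$, the condition $\autovKohn \in I$ reads $(2q + n - 1)Q + (n - 1)q \in I$, which forces $q \lesssim \radR$ and confines $Q$, for each $q$, to at most $\max(1, |I|/(2q + n - 1))$ consecutive integers clustered near $R/(2q + n - 1)$. I split the $q$-sum at the critical value $q \sim R \cdot 2^{-\nu}$. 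In the ``thick'' regime (small $q$, many admissible $Q$'s per $q$), the per-$q$ contribution has size $R^n \cdot 2^{-\nu}/(2q + n - 1)^2$, whose $q$-sum converges to $O(R^n \cdot 2^{-\nu})$. In the ``thin'' regime (large $q$, at most one admissible $Q$ per $q$, non-empty only when $2^{-\nu} < 1/\radR$), each surviving pair contributes $O(R^{n-1}/q)$ and the resulting harmonic-type sum over $q \in (R \cdot 2^{-\nu}, \radR)$ is $O(R^{n-1} \log R) = O(R^{n-1/2})$, which is absorbed into the target.

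The main obstacle is the dimension count in the thin regime, where $|I|$ is smaller than the natural spacing of the eigenvalues $\autovKohn$ and the number of pairs in $I$ is a genuine Diophantine question; however, the crude upper bound of at most one lattice point per $q$-level, combined with the decay of $\dll$ in $q$, suffices to yield the claimed bound. Once this count is in place, the $L^2$ estimate and Riesz--Thorin interpolation complete the argument.
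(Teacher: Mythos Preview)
Your approach is essentially the same as the paper's: establish the $L^1\to L^2$ endpoint via a lattice-point count and then interpolate with the trivial $L^2\to L^2$ bound. The paper invokes Theorem~\ref{teoremariassuntivo} to bound $\|\pll\|_{(L^1,L^2)}^2$, but at $p=1$ that theorem amounts precisely to your zonal-function identity $\|\pll\|_{(L^1,L^2)}^2=\dll/\omega_{2n-1}$, so nothing is lost by bypassing it. The counts are organized differently: the paper uses the factorization $\dll\le C\,\autovKohn^{\,n-2}(\ell+\ell')\le C R^{n-2}(\ell+\ell')$ together with the substitution $(m,m')=(2\ell+n-1,\,2\ell'+n-1)$, which turns the constraint into a hyperbolic annulus $mm'\in(\tilde R_1^\nu,\tilde R_2^\nu)$ and reduces everything to bounding $\sum_{\autovKohn\in I}(\ell+\ell')$; you instead parametrize by $(q,Q)=(\ql,\Ql)$ and split into thick and thin $q$-regimes. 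Both routes yield the same bound, and your direct argument has the merit of being self-contained rather than citing an external projection estimate.

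One slip to fix: your bound $\dll\le C(\Ql+\ql)(\Ql\ql+1)^{n-2}$ is false when $\ql=0$ and $n\ge3$, since there $\dll$ has order $\Ql^{\,n-1}$, not $\Ql$. The correct estimate is $\dll\le C(\Ql+\ql+1)(\Ql+1)^{n-2}(\ql+1)^{n-2}$. With this correction your per-$q$ contribution in the thick regime is still $O\bigl(R^n 2^{-\nu}/(2q+n-1)^2\bigr)$ and the thin-regime estimate (where $\ql\ge1$) is unchanged, so the final conclusion stands.
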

\begin{proof}
Set 
${R_{1}^{\nu}}:=
R(1-2^{1-\nu})$
and
${R_{2}^{\nu}}:= R(1-2^{-1-\nu})$.

We begin by proving that
\begin{equation}\label{sommaiperboleab}
\sum_{{\autovKohn\in(\rnuno,\rndue)}}
(\ell+\ell') \le C
R^2 \max\bigl(2^{-\nu},1/\sqrt{R}\bigr)\, .
\end{equation}
First, we observe that
$\autovKohn\in(\rnuno,\rndue)$ if and only if 
$$
2\rnuno+(n-1)^2 \le (2\ell+n-1)(2\ell'+n-1)\le
2\rndue+(n-1)^2\,.
$$ 
For notational convenience, we write $\tilde\rnuno=2\rnuno+(n-1)^2$ and
$\tilde\rndue=2\rndue+(n-1)^2$.
Now, setting $2\ell+n-1=m$ and
$2\ell'+n-1=m'$, we may write
$$
\sum_{{\autovKohn\in(\rnuno\,,\rndue)}} (\ell+\ell')
=\sum_{\tilde\rnuno \le mm'\le \tilde\rndue} (m-n+1)
\le C \sum_{\tilde\rnuno \le mm'\le \tilde\rndue} m
\, .
$$
Using the elementary bound $\sum_{\{k:\, 0\le a\le k\le b\}}k\le
b(b-a+2)$, for $a,b\in\RR_+$, we obtain that
\begin{align*}
\sum_{{\autovKohn\in(\rnuno\,,\rndue)}} (\ell+\ell')
& \le C\sum_{m'=1}^{[\tilde\rndue]+1}   
\sum_{\{ m:\, \tilde\rnuno\le mm'\le\tilde\rndue\}} m \\
&\le 
C \sum_{m'=1}^{[\tilde\rndue]+1} \frac{\tilde\rndue}{m'}\biggl(
\frac{\tilde\rndue-\tilde\rnuno}{m'}+2\biggr) \\
& \le C \sum_{m'=1}^{[\tilde\rndue]+1} \frac{R}{m'}\biggl(
\frac{R2^{-\nu}}{m'}+2\biggr) \\
& \le C \bigl(R^2 2^{-\nu}+ R\log R\bigr) \\
& \le C 
R^2 \max\bigl(2^{-\nu},1/\sqrt{R}\bigr)\, .
\end{align*}

Using orthogonality,
Theorem \ref{teoremariassuntivo}
and \eqref{sommaiperboleab}
we obtain
\begin{align*}
&\Big\|\sum_{\autovKohn\in(\rnuno,\rndue)}
\pll f \Big\|^2_{\normaduesfera}
= \sum_{\autovKohn\in(\rnuno,\rndue)}
\|\pll f\|^2_{\normaduesfera}\\
&\le C\,
\sum_{\autovKohn\in(\rnuno,\rndue)}
\left( \frac{2\ell\ell'+(n-1)(\ell+\ell')}{\ell+\ell'}\right)^{n-2}
\left(\ell+\ell' \right)^{n-1}
\|f\|^2_{L^1(\sfera)}\\
&\le C R^{n-2}
\sum_{\autovKohn\in(\rnuno,\rndue)}
\left(\ell+\ell' \right) 
\|f\|^2_{L^1(\sfera)} \\
& \le C R^n \max\bigl(2^{-\nu},1/\sqrt{R}\bigr)
\|f\|^2_{L^1(\sfera)}\, .
\end{align*}
Therefore,
$$
\Big\|\sum_{\autovKohn\in(\rnuno,\rndue)} \pll \Big\|_{(L^1,L^2)}
\le C \Bigl( R^n \max\bigl(2^{-\nu},1/\sqrt{R}\bigr)\Bigr)^{\unme}
\, ,
$$
and then a standard application of Riesz-Thorin Theorem finally yields 
the lemma.
\end{proof}

Before proving
($\ref{stima-da-provare}$), we 
provide a uniform bound for the remainder $\resto$.
\begin{proposition}\label{lemmaresto}
There exists a costant $C>0$ such that for all  $1\le p\le 2$  
 \begin{equation}\label{stimarestov}
\|\resto f\|_{L^p}
 \le C\|f\|_{L^p}\, .
\end{equation}
\end{proposition}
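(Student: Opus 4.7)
My plan rests on two complementary facts. First, $\resto$ is a spectral multiplier supported on eigenvalues $\autovKohn$ in the narrow window $(R-C\radR,R]$, and on that window the multiplier is pointwise of size $R^{-\delta/2}$. Second, in the regime $\nu\ge[\log\radR]$ the quantity $\max(2^{-\nu},1/\radR)$ appearing in Lemma \ref{sommeiperboli} saturates at $1/\radR$, so on every such dyadic window the restriction-type bound reads $\bigl\|\sum_{\autovKohn\in I_\nu}\pll\bigr\|_{(L^p,L^2)}\le C R^{\delta(p)/2}$, with $\delta(p)=(2n-1)|\unpi-\unme|$. At the critical exponent $\delta=\delta(p)$ the $L^p\to L^2$ loss of order $R^{\delta(p)/2}$ and the pointwise smallness $R^{-\delta/2}$ of the multiplier cancel exactly.

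Concretely, I would bound each dyadic piece $\sdrnu$ with $\nu\ge[\log\radR]$ separately. Since $\sfera$ has finite measure and $p\le 2$, H\"older's inequality gives $\|\sdrnu f\|_{L^p}\le C\|\sdrnu f\|_{L^2}$. Combining the orthogonality of the spaces $\hll$ with property (ii) of $\fdrnu$, namely $|\fdrnu|\le C 2^{-\nu\delta}$, I obtain
\[
\|\sdrnu f\|_{L^2}^2=\sum_{\ell,\ell'}|\fdrnu(\autovKohn)|^2\,\|\pll f\|_{L^2}^2\le C\, 2^{-2\nu\delta}\,\Bigl\|\sum_{\autovKohn\in I_\nu}\pll f\Bigr\|_{L^2}^2,
\]
where $I_\nu:=\bigl(R(1-2^{1-\nu}),R(1-2^{-1-\nu})\bigr)$ is the support of $\fdrnu$. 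Applying Lemma \ref{sommeiperboli} to control the inner projection by $C R^{\delta(p)/2}\|f\|_{L^p}$ and summing the resulting geometric series in $\nu$ then yields
\[
\|\resto f\|_{L^p}\le C R^{\delta(p)/2}\sum_{\nu\ge[\log\radR]}2^{-\nu\delta}\,\|f\|_{L^p}\le C\,R^{(\delta(p)-\delta)/2}\|f\|_{L^p},
\]
which is uniform in $R$ provided $\delta\ge\delta(p)$, in particular under the standing hypothesis $\delta>\delta(p)$ of Theorem \ref{teoremaBochner}.

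No serious obstacle is anticipated: the argument is essentially a direct application of Lemma \ref{sommeiperboli} in its saturated range, combined with the trivial embedding $L^2(\sfera)\hookrightarrow L^p(\sfera)$ for $p\le 2$ on the compact sphere. The only bookkeeping point is to verify that $\sum_{\nu\ge[\log\radR]}\fdrnu$ indeed recovers the full multiplier defining $\resto$, which is immediate from the Littlewood--Paley-type partition $\sum_\nu\varphi(2^\nu\cdot)\equiv 1$ underlying the definition of $\fdrnu$. The conceptually interesting feature of the proof is not a technical difficulty but rather the precise matching of exponents at the endpoint $\delta=\delta(p)$, which reflects that Lemma \ref{sommeiperboli} encodes exactly the right Stein--Tomas-type gain on windows of width $\radR$ about $R$.
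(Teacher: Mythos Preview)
Your proof is correct and follows essentially the same approach as the paper's: H\"older's inequality to pass from $L^p$ to $L^2$ on the compact sphere, the pointwise bound $|\fdrnu|\le C\,2^{-\nu\delta}$, and Lemma~\ref{sommeiperboli} in its saturated form $\max(2^{-\nu},1/\radR)=1/\radR$ for $\nu\ge[\log\radR]$, followed by a geometric sum in $\nu$. The only cosmetic difference is that you apply the triangle inequality in $L^p$ across the pieces $\sdrnu$ before invoking H\"older, whereas the paper applies H\"older to $\resto f$ first and then uses orthogonality of the $\pll f$; both routes yield the same final bound $CR^{(\delta(p)-\delta)/2}\|f\|_{L^p}$.
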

\begin{proof}
As in Lemma \ref{sommeiperboli}
we denote 
$R(1-2^{1-\nu})$ and
$R(1-2^{-1-\nu})$ by, resp.,
${R_{1}^{\nu}}$ and ${R_{2}^{\nu}}$.
\par
H\"older's inequality and Lemma
$\ref{sommeiperboli}$
yield
 \begin{align*}
\|\resto f\|_{L^p}^2&\le C\|\resto f\|_{L^2}^2\\ 
& = C
\sum_{\nu=[ \log \radR]}^{+\infty}
  \sum_{\ell,\ell'=0}^{+\infty} \Big\|
\fdrnu \left(\autovKohn\right) \pll f \Big\|_{L^2}^2 
\\
&\le C\sum_{\nu=[ \log \radR]}^{+\infty}
 2^{-2\delta \nu} 
\sum_{\autovKohn\in(\rnuno,\rndue)}\|
\pll f\|_{L^2}^2 
\\
& \le C 
\sum_{\nu=[ \log \radR]}^{+\infty}
 2^{-2\delta \nu} 
R^{(2n-  1)(\unpi-\unme) } 
\|f\|_{L^p}^2\\
& \le C  
 R^{- \delta  } 
R^{(2n-1)(\unpi-\unme)}
\|f\|_{L^p}^2\\
& \le C  
 R^{-\big( \delta -
{(2n-1)(\unpi-\unme)}\big)}
\|f\|_{L^p}^2\\
&\le C
\|f\|_{L^p}^2\,,
\end{align*}
since $\delta>\delta(p)$,
where
$\delta (p)=
(2n-1)\big| \unpi-\unme\big|$
is as in Theorem \ref{teoremaBochner}, thus
proving \eqref{stimarestov}.
\end{proof}

Now we  turn back to the proof of  \eqref{stima-da-provare}.  
First, by using Theorem \ref{teoremariassuntivo} and 
the restriction-type estimate \eqref{rieszthorin}
we   prove an $(L^p,L^2)$-bound for $\sdrnu$.

\begin{proposition}\label{lemmasdrnu}
For $\nu=1,2,\dots,[\log \sqrt{R}]$  and $1\le p\le 2$  we have
 \begin{equation}\label{stimasdrnu}
\|\sdrnu f\|_{L^2(\sfera)}\le
C 2^{-\delta \nu}
2^{-\nu(\unpi-\unme)}
\sqrt{R}^{2n(\frac{1}{p}-\frac{1}{2})}
 \|f\|_{L^p(\sfera)}\, .
\end{equation}
\end{proposition}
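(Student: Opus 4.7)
The proof should be a direct combination of the uniform pointwise bound on the multiplier $\fdrnu$ with the restriction-type estimate of Lemma \ref{sommeiperboli}. The key observation is that the multiplier $\fdrnu(\autovKohn)$ is supported on the eigenvalues $\autovKohn \in (\rnuno, \rndue)$ and is bounded there by $C\,2^{-\nu\delta}$ by property (ii) of $\fdrnu$.

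First I would use the orthogonality of the spaces $\hll$ together with the fact that $\pll$ is a projection to write
\begin{equation*}
\|\sdrnu f\|_{L^2(\sfera)}^2
= \sum_{\ell,\ell'=0}^{+\infty}
\bigl|\fdrnu(\autovKohn)\bigr|^2\,\|\pll f\|_{L^2(\sfera)}^2
\le C\, 2^{-2\nu\delta}
\sum_{\autovKohn\in(\rnuno,\rndue)}\|\pll f\|_{L^2(\sfera)}^2,
\end{equation*}
where the second inequality uses (ii) and the support property (i) of $\fdrnu$. Next, observe that $\sum_{\autovKohn\in(\rnuno,\rndue)}\|\pll f\|_{L^2(\sfera)}^2$ equals $\bigl\|\sum_{\autovKohn\in(\rnuno,\rndue)}\pll f\bigr\|_{L^2(\sfera)}^2$, again by orthogonality. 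Applying Lemma \ref{sommeiperboli} we obtain
\begin{equation*}
\sum_{\autovKohn\in(\rnuno,\rndue)}\|\pll f\|_{L^2(\sfera)}^2
\le C\,\Bigl(R^n \max\bigl(2^{-\nu},1/\sqrt{R}\bigr)\Bigr)^{2(\unpi-\unme)}\|f\|_{L^p(\sfera)}^2.
\end{equation*}

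Finally, I would exploit the hypothesis $\nu\le[\log\sqrt R]$, which is equivalent to $2^{-\nu}\ge 1/\sqrt R$, so that $\max(2^{-\nu},1/\sqrt R)=2^{-\nu}$. Combining the two displays yields
\begin{equation*}
\|\sdrnu f\|_{L^2(\sfera)}^2
\le C\, 2^{-2\nu\delta}\, R^{2n(\unpi-\unme)}\, 2^{-2\nu(\unpi-\unme)}\,\|f\|_{L^p(\sfera)}^2,
\end{equation*}
and taking square roots gives exactly \eqref{stimasdrnu}.

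There is essentially no obstacle here, since all the work has already been done in Lemma \ref{sommeiperboli} and Theorem \ref{teoremariassuntivo}; the only delicate point is the algebraic verification that the sharp exponent $R^{n(\unpi-\unme)}$ matches, and that the restriction $\nu\le[\log\sqrt R]$ is precisely what is needed to remove the $1/\sqrt R$ alternative in the $\max$. The difficulty of the overall theorem is shifted to the companion off-diagonal estimate on the complement of a Kor\'anyi ball, which will be combined with \eqref{stimasdrnu} via interpolation or a Koranyi-ball decomposition to produce the decay $2^{-\eps\nu}$ required in \eqref{stima-da-provare}.
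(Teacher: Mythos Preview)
Your proof is correct and follows the paper's argument essentially line by line: orthogonality, the pointwise bound (ii) and support property (i) of $\fdrnu$, a second use of orthogonality to regroup as $\bigl\|\sum\pll f\bigr\|_{L^2}^2$, and then Lemma~\ref{sommeiperboli}. Your explicit remark that $\nu\le[\log\sqrt R]$ forces $\max(2^{-\nu},1/\sqrt R)=2^{-\nu}$ is a small clarification the paper leaves implicit, but otherwise the two proofs coincide.
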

\begin{proof}
As a consequence of orthogonality we have
\begin{align*}
\|\sdrnu f\|^2_{\normaduesfera}
&=\sum_{\ell,\ell'=0}^{+\infty}
\fdrnu (\autovKohn)^2
\|\pll f\|^2_{\normaduesfera}\\
&= \sum_{\autovKohn\in(\rnuno,\rndue)}
\fdrnu (\autovKohn)^2
\|\pll f\|^2_{\normaduesfera}\\
&\le C\,2^{-2\nu\delta}
\sum_{\autovKohn\in(\rnuno,\rndue)}
\|\pll f\|^2_{\normaduesfera} \\
&= 
C\,2^{-2\nu\delta}
\Big\|\sum_{\autovKohn \in(\rnuno,\rndue)}
\pll f\Big\|^2_{\normaduesfera}\\
&\le C\,2^{-2\nu\delta} R^{2n(\unpi-\unme)}
2^{-2\nu(\unpi-\unme)}\|f\|^2_{L^p (\sfera)} \, ,
\end{align*}
for all $1\le p\le 2$, 
where we used the fact that
$\sup_{t\in\RR}|\fdrnu (t)|\le C\,2^{-\delta\nu}$
and Lemma \ref{sommeiperboli}.  This gives the result.
\end{proof}

Consider now a Koranyi ball $\B$ in $\sfera$ with radius
$\frac{2^{\nu}}{\sqrt{R}}$.
 H\"older's inequality yields
\begin{equation}\label{Holder}
\|\sdrnu f\|_{L^p (\B)}
\le |\B|^{\unpi-\unme} \|\sdrnu f\|_{L^2 (\B)}\, .
\end{equation}

By \eqref{Holder} and \eqref{stimasdrnu}
it follows that
\begin{align}
\|\sdrnu f\|_{L^p (\B)}
&\le C
{\left(\frac{2^{\nu}}{\sqrt{R}}\right)}^{2n(\unpi-\unme)}
\|\sdrnu f\|_{L^2 (\B)}\notag\\
&\le C
{\left(\frac{2^{\nu}}{\sqrt{R}}\right)}^{2n(\unpi-\unme)}
 2^{-\nu\delta}
2^{-\nu(\unpi-\unme)}
\sqrt{R}^{2n(\unpi-\unme)}  \|f\|_{L^p(\sfera)} \notag \\
&\le C
2^{-\nu (\delta-\delta(p))} \| f\|_{L^p (\sfera)}\,. \label{B-K}
\end{align}

It only remains to prove an analogous estimate  for
$\|\sdrnu f\|_{L^p (\sfera\setminus \B)}$.  It suffices to show
that for every $\gamma >0$ there exists $\eps_0>0$ such that
\begin{equation}\label{stimafuori}
\int_{\{ w:\, \sqrt{R}\d (z,w)> 2^{\nu(1+\gamma)}\} }
\big|
\nucleosdrnu (z,w)\big|\,d\sigma (w)
\le C2^{-{\nu\eps_0}}\,,
\end{equation}
where $\nucleosdrnu$ denotes the kernel of  $\sdrnu$.
Indeed, \eqref{B-K} holds true, with the same proof, when $\B$ 
has radius $\frac{2^{\nu(1+\gamma)}}{\sqrt{R}}$ and $\gamma>0$
is sufficiently small.
Moreover, 
by Schur's test,
\eqref{stimafuori} will imply that $\sdrnu$ is bounded on
$L^p(\sfera\setminus \B)$ with norm less than or equal to $ C2^{-{\nu\eps_0}}$.\medskip

Set $r=\sqrt{R}$
and define
\begin{equation}\label{hnur}
\hnur (\lambda):=
\bigl(
1-\lambda^2/r^2 \bigr)^\delta_+
e^{\lambda^2/r^2}
\varphi \bigl( 2^\nu\bigl(1-\lambda^2/r^2 \bigr) \bigr)\ .
\end{equation}
where $\varphi$ is the
function defined at the beginning of
the section.

\begin{lemma}\label{proprietahnur}
The functions $\hnur$, defined by
\eqref{hnur},
satisfy the following properties:
\begin{itemize}
\item[(i)]
$\left| {\rm supp\,}
\hnur\right|\le
C\,r2^{-\nu}$;
\item[(ii)] for every non-negative integer $k$
there exists a constant $C_k$ such that for all $s>0$
$$
\int_{|t|\ge s} |\hhnur (t)|\,dt \le c_k s^{-k}
r^{-k} 2^{(k-\delta)\nu}\, ;
$$
\item[(iii)]
$\big\| \hnur (\radcL) \big\|_{(L^2,L^2)} \le C 2^{-\delta\nu}$.
\end{itemize}
\end{lemma}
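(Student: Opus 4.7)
The plan is to verify the three properties directly from the definition \eqref{hnur}, exploiting the scaling that $\lambda\sim r$ and $1-\lambda^2/r^2\sim 2^{-\nu}$ on the support of $\hnur$.

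For (i), since $\varphi$ is supported in $(\unme,2)$, the condition $2^\nu(1-\lambda^2/r^2)\in(\unme,2)$ forces $\lambda^2/r^2\in(1-2^{1-\nu},\,1-2^{-\nu-1})$. Because $\hnur$ is even, its support lies in two symmetric intervals in $\lambda$; applying the mean value theorem to $\sqrt{\cdot}$ on $(1-2^{1-\nu},\,1-2^{-\nu-1})$ shows that each interval has length comparable to $r2^{-\nu}$, whence $|{\rm supp\,}\hnur|\le C r 2^{-\nu}$.

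For (ii), the key remark is that on the support one has $|\lambda/r^2|\lesssim 1/r$ and $1-\lambda^2/r^2 \sim 2^{-\nu}$, so each $\lambda$-derivative applied to any of the three factors in \eqref{hnur} produces a multiplicative factor of size at most $2^\nu/r$: differentiating $(1-\lambda^2/r^2)^\delta$ yields $-2\delta\lambda/r^2\cdot(1-\lambda^2/r^2)^{\delta-1}$, whose size is $2^\nu/r$ times the original $2^{-\delta\nu}$; differentiating $e^{\lambda^2/r^2}$ produces the $O(1/r)$-factor $2\lambda/r^2$; and the chain rule on $\varphi(2^\nu(1-\lambda^2/r^2))$ brings out $2^\nu\cdot 2\lambda/r^2\sim 2^\nu/r$. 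A routine induction gives $\|\hnur^{(N)}\|_\infty\le C_N(2^\nu/r)^N 2^{-\delta\nu}$, and combining with (i),
$$
\|\hnur^{(N)}\|_1\le C_N\,r^{1-N}\,2^{(N-1-\delta)\nu}.
$$
Integration by parts $N$ times then yields $|\hhnur(t)|\le C_N|t|^{-N}r^{1-N}2^{(N-1-\delta)\nu}$, and choosing $N=k+1$ and integrating over $|t|\ge s$ gives the claim for $k\ge 1$. The case $k=0$ reduces to showing $\|\hhnur\|_1\le C 2^{-\delta\nu}$, which follows by splitting at $s_0=2^\nu/r$: the trivial bound $|\hhnur(t)|\le\|\hnur\|_1\le Cr 2^{-(\delta+1)\nu}$ is used for $|t|\le s_0$, while the pointwise bound above with $N=2$ is used for $|t|>s_0$; both pieces contribute $\lesssim 2^{-\delta\nu}$.

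Finally, (iii) is immediate from the spectral theorem: since $\radcL$ is self-adjoint with spectrum in $[0,\infty)$, the operator norm of $\hnur(\radcL)$ on $L^2(\sfera)$ is bounded by $\sup_{\lambda\ge 0}|\hnur(\lambda)|$, and on ${\rm supp\,}\hnur$ one has $e^{\lambda^2/r^2}\le e$, so that $\|\hnur\|_\infty\le C\|\fdrnu\|_\infty\le C 2^{-\delta\nu}$ by property (ii) of $\fdrnu$ recorded after \eqref{fdrnu}. The main technical step is the bookkeeping in (ii), where one must track the competing contributions of $2^\nu$ (coming from differentiating $\varphi$ and the binomial factor) and of $r$ (coming from $\lambda/r^2\sim 1/r$ on the support), until the final bound manifests the interplay between the width $r2^{-\nu}$ of ${\rm supp\,}\hnur$ and its sup-norm $2^{-\delta\nu}$.
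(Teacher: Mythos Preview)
Your proof is correct and follows essentially the same route as the paper's: all three parts are handled via the support and sup-norm estimates for $\hnur$ and its derivatives, and (ii) is obtained by integrating by parts $k+1$ times and then integrating the resulting $|t|^{-(k+1)}$ decay over $|t|\ge s$. The only difference is that you treat the case $k=0$ separately (via the splitting at $s_0=2^\nu/r$), whereas the paper's write-up formally covers only $k\ge 1$ since $\int_{|t|\ge s}|t|^{-1}\,dt$ diverges; since only large $k$ are ever used in the application, this is a harmless refinement rather than a genuinely different argument.
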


\begin{proof}
Property (i) is clear, (iii) follows at once since 
$\big\| \hnur (\radcL) \big\|_{(L^2,L^2)} \le\|\hnur\|_{\infty}\le 
C 2^{-\delta\nu}$. 
It is easy to check that for all $k$ there exists $C_k >0$ such that
$$
\big\| \hnur ^{(k)} \big\|_{L^{\infty}} \le
C_k
r^{-k}2^{(k-\delta)\nu}\,.
$$
Then integrating by parts $k+1$ times we obtain
\begin{align*}
\int_{|t|\ge s} \left| \hhnur (t)\right|\,dt 
&\le 
\int_{|t|\ge s}
\int \left|
\frac{\partial^{k+1}}{\partial w ^{k+1}}
\hnur(w)\right|dw\,
\frac{1}{|t|^{k+1}}\, dt  \\
&\le C  s^{-k}
r^{-k}2^{-(\delta-k)\nu}\,.
\end{align*}
This proves (ii).
\end{proof}

Observe that 
if 
$\fdrnu$ is the function defined by
\eqref{fdrnu}, then we 
may  write
$$
\fdrnu (\cL) = \hnur (\radcL) e^{-\cL/r^2}\,.
$$

Clearly
$$
\nucleosdrnu (z,w)= \hnur (\radcL) p_{1/r^2}(z,w)\,,
$$
since the operators $\hnur (\radcL)$ and $e^{-\frac{\cL}{r^2}}$
are self-adjoint and commute. Here the operator
$\hnur (\radcL)$ is acting in the variable $w$. \medskip

In order to prove \eqref{stimafuori}, we call 
the integration set 
$$
A(z) := \{ w:\, r\d (z,w)> 2^{\nu(1+\gamma)}\} 
$$
and decompose it
in annuli in the following way.
Let $i\in\ZZ^+$ be such that
$$
2^{i-1}< r\le 2^i\,.
$$
and consider the shells
$$
A_j (z):= \{w\,:\, 2^j\le \d (z,w)\le 2^{j+1}\}\,.
$$
Then
$$
A(z) \subseteq \bigcup_{\nu(1+\gamma)-i\le j\le0} A_j(z)\, .
$$

Next we write
\begin{align*}
\nucleosdrnu (z,\zeta)
&=  \Bigl( \hnur (\radcL)\, 
p_{1/r^2}(z,\cdot) \Bigr) (\zeta) \\
&={\frac{1}{\radpi}}
 \int_{-\infty}^{\infty}
{\hhnur} (t) \Bigl( \cos t\radcL\,
\,p_{1/r^2}(z,\cdot) \Bigr) (\zeta)\,dt \\
&={\frac{1}{\radpi}}
\int_{-\infty}^{\infty}
{\hhnur}(t)
\Bigl( \cos t\radcL\,\bigl( 
\,p_{1/r^2}(z,\cdot)
\chi_{\{ w:\,\d (z,w)\le{2^{j-1}} \} } \\
&\qquad\qquad\qquad + p_{1/r^2}(z,\cdot)
\chi_{\{w:\, \d (z,w)> {2^{j-1}}\} } \bigl)\Bigl)(\zeta)\,dt \\
& =:{\rm I}_{\nu,r}(z,\zeta)+ {\rm II}_{\nu,r}(z,\zeta)\,.
\end{align*}

Notice that, when $\zeta\in A_j (z)$, so that $\d(z,\zeta)\ge 2^j$,
$$
{\rm I}_{\nu,r}(z,\zeta)
= \frac{1}{\radpi}
\int_{|t|\ge 2^{j-1}} {\hhnur}(t)
\cos t\radcL\,\Bigl(p_{1/r^2}(z,\cdot)
\chi_{\{ w:\,\d (z,w)\le{2^{j-1}} \} }\Bigr)(\zeta)\, dt\, ,
$$
using the finite speed propagation property
and the fact that we are integrating on the set
$\{ w:\,\d (z,w)\le{2^{j-1}}\}$.

Now, using H\"older inequality, the fact that 
$\cos t\radcL$ is a bounded operator on $L^2 (\sfera)$
and \eqref{normapt},
we obtain that
\begin{align*}
\|{\rm I}_{\nu,r}(z,\cdot)\|_{L^1 (A_j (z))}
&\le |A_j (z)|^{\unme} \frac{1}{\radpi}
\int_{|t|\ge 2^{j-1}} 
\big|{\hhnur}(t)\big|\, \|p_{1/r^2}(z,\cdot) \|_{L^2} \,dt \\ 
&\le C\,\frac{ |A_j (z)|^{\unme}}{ 
|B( z,\sqrt2/r)|^{\unme}}
\int_{|t|\ge 2^{j-1}} |{\hhnur}(t)|\,dt \\
&\le C\,
\frac{|B(z, 2^j)|^{\unme}}{|B(z,{2^{-i}})|^{\unme}}
\int_{|t|\ge 2^{j-1}} \big|{\hhnur}(t)\big|\,dt \\
&\le C\, { 2^{ n(i+j)}} \int_{|t|\ge 2^{j-1}}
\big|{\hhnur}(t)\big| \,dt\,. 
\end{align*}
Applying (ii) in Lemma \ref{proprietahnur}
we see that for every positive integer $k$ there exists $C_k$ such that
\begin{align*}
\|{\rm I}_{\nu,r}(z,\cdot)\|_{L^1 (A_j (z))}
&\le
C_k { 2^{ n(i+j)}} 2^{ -jk} r^{ -k} 2^{ k} 2^{ (k-\delta)\nu} \\
&\le 
C_k { 2^{ j(n-k)}} r^{ n-k} 2^{ k} 2^{ (k-\delta)\nu}\, .
\end{align*} 
By choosing $k>n(\gamma+1)/\gamma$, we then obtain
\begin{align*}
\sum_{\nu(1+\gamma)-i\le j\le0}
\|{\rm I}_{\nu,r}(z,\cdot)\|_{L^1 (A_j (z))}
&\le C_k  \sum_{\nu(1+\gamma)-i\le j\le0}
2^{j(n-k)} r^{ n-k} 2^{(k-\delta)\nu} \\
&\le C_k  2^{(i-\nu(1+\gamma))(k-n)} r^{n-k}
2^{ (k-\delta)\nu} \\
&\le C_k  2^{-\nu(1+\gamma)(k-n)} 2^{ (k-\delta)\nu}\\
&\le C  2^{ -\nu \eps_1 } \,,
\end{align*}
where
$\eps_{1} := \delta-k +(1+\gamma)(k-n)$ is strictly positive.

Next, using
\eqref{ideaTaylor}, \eqref{gaussian} and (iii) in Lemma
\ref{proprietahnur} we see that
\begin{align*}
 \|{\rm II}_{\nu,r}(z,\cdot)\|_{L^1 (A_j (z))} 
&\le C\,
|A_j (z)|^{\unme} \,
\Big\|\hnur (\radcL) \cos t\radcL \Bigl(p_{1/r^2}(z,\cdot)
\chi_{\{w:\,\d (z,w)> 2^{j-1} \} }\Bigr) \Big\|_{L^2 (\sfera)} \\
&\le C\,
|A_j (z)|^{\unme}
\, \big\|\hnur \big\|_{{\infty}} 
\big\|p_{1/r^2}(z,\cdot) \chi_{\{w:\,\d (z,w)> {2^{j-1}}\}}
\big\|_{L^2 (\sfera)}\\ 
&\le C \frac{|A_j (z)|^{\unme}}{|B(z,1/r)|^{\unme}} 2^{-\delta\nu}
e^{ -C_1 (2^{2j-2}r^2)} \\
&\le C
2^{(j+i)n} 2^{-\delta\nu}e^{-C_1 2^{2j+2i}}\, ,
\end{align*}
for some positive constant $C$.
To conclude our proof, we have only to estimate the sum
\begin{align*}
\sum_{\nu(1+\gamma)-i\le j\le0}
\|{\rm II}_{\nu,r}(z,\cdot)\|_{L^1 (A_j (z))}
&\le C\sum_{\nu(1+\gamma)-i\le j\le0}
2^{(j+i)n} 2^{-\delta\nu} e^{ -C_1 2^{2j+2i}} \\
&\le C 2^{-\delta\nu} \sum_{k\ge\nu(1+\gamma)} 2^{nk}e^{-C_12^{2k}} \\
&\le C 2^{-\delta\nu}
\end{align*}
for every $\nu\ge 1$, for some positive costant $C$.
Finally,
\begin{align*}
& \big\|\nucleosdrnu 
(z,\cdot)\big\|_{L^1( \{ w:\,r \d (z,w)>2^{\nu (1+\gamma)}\} )} \\
&\le \sum_{\nu(1+\gamma)-i\le j\le0}
\|{\rm I}_{\nu,r}(z,\cdot)\|_{L^1 (A_j (z))}
+ \sum_{\nu(1+\gamma)-i\le j\le0} 
\|{\rm II}_{\nu,r}(z,\cdot)\|_{L^1 (A_j (z))}\\
&\le C(2^{-\nu\eps_1}+ 2^{ -\nu \delta}) \\
&\le C 2^{-\nu\eps_0}\,,
\end{align*}
yielding \eqref{stimafuori}. 
\qed

To conclude the proof of
Theorem \ref{teoremaBochner}, it therefore remains
to prove
Proposition \ref{propoKara}.

\proof[Proof of Proposition \ref{propoKara}.]
By reasoning as in  the proof of Lemma
\ref{sommeiperboli}, we may easily show that
for some constant $C>0$ 
$$\sum_{\autovKohn\in(0, R/2)}
(\ell+\ell')
\le C 
R^{2} 
\,,$$
whence
$$\Big\|\sum_{\autovKohn\in(0, R/2)}
\pll \Big\|_{ L^2}^2
\le C 
R^{n-2}
\sum_{\autovKohn\in(0, R/2)}
(\ell+\ell') \le CR^n
\,,
$$
so that the standard  interpolation argument yields
\begin{equation}\label{rieszthorinsdro}
\Big\|\sum_{\autovKohn\in(0, R/2)}
\pll \Big\|_{(L^p, L^2)}
\le C 
R^{n(\unpi-\unme)} 
\, \,\text{for all } 1\le p\le 2\,  .
\end{equation}
Now 
take a Koranyi ball $B_0$ in $\sfera$ with radius $1/\sqrt R$. Then (\ref{Holder})
and (\ref{rieszthorinsdro})
give
\begin{align}
\|\sdrzero f\|_{L^p (\B_0)}&
\le C R^{-n(\unpi-\unme)} \|\sdrzero f\|_{L^2 (\B_0)}\notag\\
&\le C R^{-n(\unpi-\unme)} \|\sdrzero f\|_{L^2 (\sfera)}\notag \\
& \le C\| f\|_{L^p (\sfera)}\,. \label{sdr0}
\end{align}
Now observe that the kernel of the operator
$\sdrzero$ is essentially given by
$
{{s}^{\delta}_{R,0}}(z,w)={{h}_{r,0}} (\radcL) p_{1/r^2}(z,w)$, where
${{h}_{r,0}} $ has been defined in
(\ref{hnur}).
By following the same pattern 
as before, we split the $L^1$-norm of the kernel
${{s}^{\delta}_{R,0}}$
outside the ball $B_0$  into two parts.
Analogous (but easier) computations as before
yield
 \begin{align*}
 \big\|
{{s}^{\delta}_{R,0}} 
(z,\cdot)\big\|_{L^1
(  \{ w:\,\sqrt R  d (z,w)>1\} )}
&\le \sum_{-i\le j\le0}
\|{\rm I}_{0,r}(z,\cdot)\|_{L^1 (A_j (z))}
+ \sum_{-i\le j\le0} 
\|{\rm II}_{0,r}(z,\cdot)\|_{L^1 (A_j (z))}\\
&\le C. \\
\end{align*}
This inequality, combined with
(\ref{sdr0}), 
yields (\ref{stimasdrzero}).
\medskip
\qed

The following Corollary is contained in Theorem
\ref{teoremaBochner}, but we state it for sake of
clarity.
\begin{corollary}\label{sopradeltacritico}
Let $\delta>n-\unme$ and $1\le p<\infty$.  Then for $f\in L^p (\sfera)$
$$
\sdr f\to f
$$
in the $L^p$-norm, as $R\to +\infty$.\bigskip
\end{corollary}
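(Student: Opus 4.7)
The plan is to deduce this directly from Theorem \ref{teoremaBochner} by observing that the threshold $\delta(p) = (2n-1)\bigl|\tfrac{1}{p}-\tfrac{1}{2}\bigr|$ is maximized, over $p\in[1,\infty)$, at the endpoint $p=1$.

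First I would note that for every $p\in[1,\infty)$ one has $\bigl|\tfrac{1}{p}-\tfrac{1}{2}\bigr|\le\tfrac{1}{2}$, with equality precisely when $p=1$ (for $p\in(1,2]$ the quantity is $\tfrac{1}{p}-\tfrac{1}{2}<\tfrac{1}{2}$, and for $p\in[2,\infty)$ it is $\tfrac{1}{2}-\tfrac{1}{p}<\tfrac{1}{2}$). Multiplying by $2n-1$ yields
$$
\delta(p)=(2n-1)\Bigl|\tfrac{1}{p}-\tfrac{1}{2}\Bigr|\le n-\tfrac{1}{2}
$$
for every such $p$.

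Therefore the hypothesis $\delta>n-\tfrac{1}{2}$ implies $\delta>\delta(p)$ simultaneously for all $p\in[1,\infty)$. For $1\le p<2$ the $L^p$-convergence $\sdr f\to f$ then follows from Theorem \ref{teoremaBochner}; for $p=2$ it is the trivial case (spectral theorem); and for $2<p<\infty$ it follows from the case $1<p'<2$ with $p'$ the conjugate exponent, via the duality argument already invoked at the opening of Section 3 (the multiplier defining $\sdr$ is real-valued, so $\sdr$ is self-adjoint, whence $L^p$-boundedness is equivalent to $L^{p'}$-boundedness, and convergence on a dense subspace together with uniform boundedness yields convergence on all of $L^p$).

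There is no substantive obstacle here: the corollary is simply the observation that $n-\tfrac{1}{2}$ is a uniform upper bound for the critical index $\delta(p)$ on $[1,\infty)$, so a single value of $\delta$ above this threshold handles every $p$ at once.
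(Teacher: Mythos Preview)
Your argument is correct and is exactly what the paper intends: it simply remarks that the corollary ``is contained in Theorem~\ref{teoremaBochner}'' without further proof, and your observation that $\delta(p)\le n-\tfrac12$ for all $p\in[1,\infty)$ is precisely the content of that containment. Your separate treatment of the cases $p=2$ and $p>2$ is harmless but unnecessary, since Theorem~\ref{teoremaBochner} is already stated for all $1\le p<\infty$.
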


\section{Final remarks}\label{final}\medskip

\subsubsection*{Results on the Heisenberg group}
As already mentioned in the Introduction,
Mauceri \cite{Mauceri} and M\"uller \cite{Mueller} 
proved a summability result for Riesz means for the
sublaplacian $\cL_H$ on the Heisenberg group $H_{n-1}$, defined in
\eqref{heisenberg}.  
They showed that $\tilde S_R^\alpha f$ converges to $f$ in $L^p(H_{n-1})$
when $\alpha>(2n-1)\big|\frac1p-\frac12\big|$.
M\"uller actually proved a more general result concerning the
convergence of $\tilde S_R^\alpha $ in the mixed-normed spaces
$L^p (\CC^{n-2}, L^r (\RR))$.
It would be of some interest to extend the results of \cite{Casarino1,
  Casarino2} and the ones in the present paper to the case of the
mixed normed spaces on the sphere $S^{2n-1}$.

By a contraction argument, it is possible to see that 
Theorem \ref{teoremaBochner} implies 
the  result of Mauceri and M\"uller  on the Heisenberg group 
(in the case $p=r$).
When $n=2$, this can be proved by means of 
the contraction theorem for multiplier operators on $L^p (SU(2))$
to multipliers on $L^p (H_{1})$
proved by Ricci and Rubin (\cite{RicciRubin}, pag.574, 
see  also \cite{CowlingSikora}, Section 5).

When $n>2$, $L^p$-boundedness 
of Riesz multipliers on the Heisenberg group
$H_{n-1}$ follows 
 from a trasference result
   of Dooley and Gupta
(see \cite{DooleyGupta2}, Theorem 4.1).
More precisely, if $G$ is the Lie group $SU(n,1)$, with Iwasawa
decomposition $G=KAN$, where $K=U(n)$, if
 $\overline{N}$ is the image of $N$ under the Cartan
involution 
and $M$ is the centralizer of $A$ in $K$,
Dooley and Gupta studied transference in the context of 
contractions of $K$ to 
$\overline{N} M$ or $K/M$ to 
$\overline{N}$.
When $G=SU (n,1)$, 
$K/M$ is $\sfera$ and 
$\overline{N}$ is the Heisenberg group 
$H_{n-1}$.
By applying their contraction principle to the multiplier
 $(1-|\xi|)^{\delta}_+$, we  obtain
 the result in \cite{Mauceri} and \cite{Mueller}. 
\medskip

\subsubsection*{Necessary conditions}
The contraction principle for Fourier 
multipliers from
$L^p(\sfera)$ to $L^p (H_{n-1})$
proved in \cite{RicciRubin} and \cite{DooleyGupta2}
can be used to derive necessary conditions as well.
In particular, since the Riesz means  on the Heisenberg group
$H_{n-1}$ are not bounded  in $L^p(H_{n-1})$ for
$\delta\le \max \bigl(0,(2n-2)\big|\unpi-\unme\big|-\unme\bigr)$,
then the Riesz  means $\sdr$ are unbounded on $L^p(\sfera)$
for $\delta$ in the same range.

Although our result does not show whether $\delta(p)=
(2n-1)\big|\frac1p-\frac12\big|$ is the critical index, it provides, as observed above,
another proof of Mauceri and M\"uller's result on the Heisenberg
group. This fact may be viewed as an indication that $\delta(p)$ could
actually be the critical index.
\medskip

\subsubsection*{On the Riesz kernel}
It would be of great interest  to obtain an explicit expression
for the Riesz kernel, 
that we denote by $s^\delta_R$,
from which to deduce   
precise pointwise  estimates.
In the case of a self-adjoint, elliptic operator of order
$m$
on a
compact Riemannian manifold $M$ of dimension $n$, 
the Bochner-Riesz kernel $K_R^\delta$  
satisfies the pointwise estimate
$$
K_R^\delta(x,y)\le C R^{n/m}\bigl(1+R^{1/m}d(x,y)\bigr)^{-1-\delta}\,
,
$$
where $x,y\in M$ and $d(x,y)$ is their distance
in any $\cC^\infty$ metric on $M$ \cite{Hormander}.
 
What we know is that, as a
consequence of \eqref{proiettori} 
 \begin{align*}
\sdr f (z)&=
\sum_{\ell,\ell'=0}^{+\infty}
\biggl( 1-\frac{\lambda_{\ell,\ell'}}{R}\biggr)^\delta_+
\int_{\sfera} \zll (z,w) f(w)\,dw 
&=: \int_{\sfera} s^\delta_R (z,w)\,f(w)\,dw\, .
\end{align*}
Then,
the Riesz kernel $s^\delta_R$ is given by
\begin{align*}
s^\delta_R(z,w):
& = \sum_{\ell,\ell'=0}^{+\infty}
\biggl( 1-\frac{\lambda_{\ell,\ell'}}{R}\biggr)^\delta_+
\zll ( z,w) \\
& = \sum_{\ell,\ell'=0}^{+\infty}
\biggl( 1-\frac{\lambda_{\ell,\ell'}}{R}\biggr)^\delta_+
\frac{\dll}{\omega_{2n-1}}
\frac{\ql! (n-2)!}{(\ql+n-2)!}
e^{i(\ell'-\ell)\varphi}
    (\cos\theta)^{|\ell-\ell'|}
P_{\ql}^{(n-2,|\ell-\ell'|)}(\cos 2\theta)\, ,
\end{align*}
where we write $\langle z,w\rangle=\cos \theta e^{i\varphi}$, with
$\theta\in[0,\pi/2]$ and $\varphi\in[0,2\pi]$.
\bigskip

At this point we could follow the approach introduced by Szeg\"o
in (\cite{Szego}, Ch.  IX)
and developed in \cite{BonamiClerc}
to write $s^\delta_R$ as a
Ces\`aro-type kernel 
for expansions in terms of  the disc polynomials
$e^{i\beta\varphi}
    (\cos\theta)^{|\beta|}
P_{k}^{(\alpha,|\beta|)}(\cos 2\theta)\, $,
with $\alpha=n-2$, $\beta=\ell-\ell'$ and $k=\ql$.
Anyway the presence of the complex term, which is not trivial off the diagonal $\ell=\ell'$,
makes the computations much more involved than in the real case.

\end{document}